\title[Knots that are not slice either in positons or in negatons]{Knots that are not slice either in positons or in negatons}
\author{Kouki Sato}
\date{}
\newtheorem*{question}{Question}
\newtheorem{thm}{Theorem}
\newtheorem{prop}{Proposition}
\newtheorem{lem}{Lemma}
\newtheorem{cor}{Corollary}
\theoremstyle{definition}
\newtheorem*{remark}{Remark}
\newtheorem*{acknowledge}{Acknowledgements}
\begin{document}
\maketitle

\begin{abstract}
An oriented compact 4-manifold $V$ with boundary $S^3$ 
is called a positon (resp.\ negaton)
if its intersection form is positive definite (resp.\ negative definite) 
and it is simply connected.
In this paper, we prove that there exist infinitely many knots 
which cannot bound null-homologous disks
either in positons or in negatons.
As a consequence, we find knots that cannot be unknotted
either by only positive crossing changes or by only negative crossing changes. 
\end{abstract}

\section{Introduction}
Throughout this paper we work in the smooth category,
and assume that all manifolds we deal with are compact, orientable and oriented.

Cochran, Harvey and Horn \cite{cochran-harvey-horn} introduced a filtration on the smooth knot concordance group, called the {\it bipolar filtration}.
They say that  
{\it a knot $K$ in $S^3$ is slice in $V$} for a 4-manifold $V$ with boundary $S^3$
if $K$ bounds a properly embedded disk $D$ in $V$ such that 
$[D,\partial D]= 0 \in H_2(V,\partial V;\mathbb{Z})$.
They observe slice knots in
simply-connected 4-manifolds with boundary $S^3$
whose intersection forms are positive definite (resp.\ negative definite), called {\it positons}
(resp.\ {\it negatons}).
Note that by the diagonalizability theorem of Donaldson and
the classification theorem of Freedman, any positon (resp.\ negaton) is 
homeomorphic to $n\mathbb{C}P^2 \setminus \mathring{B}^4$
(resp. $n\overline{\mathbb{C}P^2} \setminus \mathring{B}^4$) for some $n \in \mathbb{N}$.
Although the following question does not directly concern the bipolar filtration, it is natural to ask. 
\begin{question}
Does there exist a knot which is not slice either in positons or in negatons?
\end{question}
In this paper, we give an affirmative answer to this question.
In fact, we prove the following theorem.
\begin{thm}
\label{thm1}
For any knot $K$ with $\tau (K) <0$ and any integer $n$ with
$0 < n < -2\tau(K) + (\varepsilon(K) - 1)/2$, 
the cable knot $K_{2,2n+1}$ is not slice either in positons or in negatons.
\end{thm}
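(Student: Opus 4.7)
The plan is to attack the two non-sliceness claims by different obstructions.

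For the positon claim I would use Hom's cabling formula for $\tau$. Since $\tau(K) < 0$ forces $\varepsilon(K) \in \{\pm 1\}$, the formula gives
\[
\tau(K_{2,2n+1}) \;=\; 2\tau(K) + n + \tfrac{1-\varepsilon(K)}{2}.
\]
The hypothesis $n < -2\tau(K) + (\varepsilon(K)-1)/2$ rearranges to $\tau(K_{2,2n+1}) < 0$, and the Ozsv\'ath--Szab\'o inequality $\tau \ge 0$ for knots slice in a positon finishes this direction.

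For the negaton claim $\tau$ is insufficient, since $\tau(K_{2,2n+1}) < 0$ is entirely compatible with negaton-sliceness, so I would pass to the $2$-fold cyclic branched cover and use the standard identification $\Sigma_{2}(K_{2,2n+1}) \cong S^{3}_{2n+1}(K)$. A null-homologous slice disk $D$ for $K_{2,2n+1}$ in a negaton $V$ would lift to a simply connected, negative definite $4$-manifold $\widetilde V$ bounding $S^{3}_{2n+1}(K)$, with $b_{2}(\widetilde V) = 2\,b_{2}(V)$ (the vanishing of $[D]^2$ is precisely what keeps $\widetilde V$ negative definite, via the $G$-signature formula). Ozsv\'ath--Szab\'o's correction-term inequality then places a lower bound on each $d\bigl(S^{3}_{2n+1}(K),[i]\bigr)$. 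To derive a contradiction I would compute these $d$-invariants via the Ozsv\'ath--Szab\'o surgery formula, which expresses them through the sequences $V_{i}(K)$ and $H_{i}(K)$. The condition $\tau(K) < 0$ gives $H_{0}(K) \le \tau(K) < 0$ and, more generally, usable upper bounds on $H_{i}(K)$ for small $|i|$ in terms of $\tau(K)$ and $\varepsilon(K)$; the quantitative content of the hypothesis is that $2n+1$ sits in precisely the range where these nonzero $H_{i}$'s push the relevant $d$-invariants strictly below the correction-term bound.

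The main obstacle is exactly this last step: combining the surgery formula, the $V_{i}/H_{i}$-bounds extracted from $\tau(K)$ and $\varepsilon(K)$, and the correction-term inequality coming from $\widetilde V$, and tracking the arithmetic tightly enough to confirm that they really are mutually inconsistent under the stated bound on $n$.
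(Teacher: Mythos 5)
Your positon half is exactly the paper's argument (Hom's cabling formula gives $\tau(K_{2,2n+1})=2\tau(K)+n-(\varepsilon(K)-1)/2<0$ under the stated bound on $n$, and the Ozsv\'ath--Szab\'o genus bound in a negaton, applied to the mirror, shows a knot slice in a positon has $\tau\ge 0$), so that direction is fine. The negaton half, however, has a genuine gap, and it starts with the claimed identification $\Sigma_2(K_{2,2n+1})\cong S^3_{2n+1}(K)$: this is false for a nontrivial companion $K$. The cable winds twice around $K$, so a meridian of $K$ has even linking number with $K_{2,2n+1}$; hence the double cover restricted to the exterior $E_K$ is the trivial (disconnected) cover, and $\Sigma_2(K_{2,2n+1})$ is a toroidal manifold containing \emph{two} copies of $E_K$ glued to the branched cover of the solid-torus piece. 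It cannot be $S^3_{2n+1}(K)$ (for example, for $K$ the left-handed trefoil the latter is an atoroidal Seifert fibered space). Consequently the surgery formula for $d(S^3_{2n+1}(K),[i])$ is computing invariants of the wrong manifold. (Smaller issues in the same step: the branched cover $\widetilde V$ need not be simply connected, and negative definiteness requires checking $b_2(\widetilde V)=2b_2(V)$ and $b_1(\widetilde V)=0$, not just the signature formula $\sigma(\widetilde V)=2\sigma(V)-[D]^2/2$; also the Ozsv\'ath--Szab\'o obstruction over a rational homology sphere boundary gives inequalities of the form $c_1^2+b_2\le 4d$, not a clean ``$d\ge 0$''.)

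Moreover, even if you repair the topology, the analytic endgame you sketch cannot work as stated: for positive surgeries the relevant correction terms are governed by the nonnegative integers $V_i(K)$ (Ni--Wu), and for many knots with $\tau(K)<0$ --- e.g.\ the left-handed trefoil --- all $V_i$ vanish, so $d(S^3_{2n+1}(K),[i])=d(L(2n+1,1),[i])$ and no contradiction with a negative definite filling can arise; yet the theorem must hold for exactly such $K$. The paper's route is different and avoids both problems: it uses the $d_1$-invariant $d_1(J)=d(S^3_1(J))$ of the \emph{cable itself}. If $K_{2,2n+1}$ were slice in a negaton $V$, attaching a $(+1)$-framed $2$-handle along it and capping the slice disk with the core yields a sphere of square $+1$, so $V\cup h^2\cong W'\#\mathbb{C}P^2$ with $W'$ negative definite and $\partial W'=S^3_1(K_{2,2n+1})$, forcing $d_1(K_{2,2n+1})\ge 0$ by Ozsv\'ath--Szab\'o's Corollary 9.8 (here the boundary is an integer homology sphere, so the obstruction is clean). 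This contradicts the author's estimate $d_1(K_{2,4k\pm 1})\le -2k<0$, which holds for \emph{every} knot $K$ and is the key input you would need to reprove if you wanted to push a surgery-formula computation through; note it concerns $+1$-surgery on the cable, not $(2n+1)$-surgery on the companion.
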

Here $K_{2,2n+1}$ denotes the $(2,2n+1)$-cable of $K$, $\tau(K) \in \mathbb{Z}$ is the 
Ozsv\'{a}th-Szab\'{o} $\tau$-invariant of $K$, and 
$\varepsilon(K) \in \{ 0, \pm 1 \}$ is Hom's $\varepsilon$-invariant of $K$.
Note that if $\tau(K) \neq 0$ then $\varepsilon(K) \neq 0$ (see \cite{hom}).

Next we discuss the relationship between Theorem \ref{thm1} and crossing changes.
If a knot  $K_1$ is deformed into $K_2$ by a crossing change 
from a positive crossing (Figure \ref{pos}) to a negative crossing (Figure \ref{neg})
(resp.\ from a negative crossing to a positive crossing),
then we say that $K_1$ is deformed into $K_2$
by a {\it positive (resp.\ negative) crossing change}. 
It is well known that there exist infinitely many knots 
which cannot be deformed into any slice knot by only positive crossing changes.
For instance, any knot with $\tau(K)<0$ has this property \cite{ozsvath-szabo2}.
In this paper, we obtain the following result as a corollary of Theorem \ref{thm1}.
\begin{cor}
\label{cor1}
For any knot $K$ with $\tau (K) <0$ and any integer $n$ with
$0 < n < -2\tau(K) + (\varepsilon(K) - 1)/2$, 
the cable knot $K_{2,2n+1}$ cannot be deformed into a slice knot
(in particular, it cannot be unknotted) either by only positive crossing
changes  or by only negative crossing changes.
\end{cor}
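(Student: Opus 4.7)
The plan is to derive Corollary \ref{cor1} directly from Theorem \ref{thm1} via the standard fact that a positive (resp.\ negative) crossing change is realized by attaching a $(+1)$-framed (resp.\ $(-1)$-framed) 2-handle along a small meridional unknot, so that reducibility to a slice knot by crossing changes of one sign forces a null-homologous slice disk in a 4-manifold of the corresponding definiteness.

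First, I would suppose for contradiction that $J := K_{2,2n+1}$ can be deformed into a slice knot $J'$ through $p$ positive crossing changes
\[
J = K_0 \to K_1 \to \cdots \to K_p = J'.
\]
Each step $K_{i-1} \to K_i$ is realized by $(+1)$-surgery on a small meridional unknot $U_i \subset S^3 \setminus K_{i-1}$ with $\mathrm{lk}(U_i, K_{i-1}) = 0$. Stacking the $p$ trace cobordisms on top of $S^3 \times I$ yields a 4-manifold $W$ with $\partial W = S^3 \sqcup S^3$, obtained from $S^3 \times I$ by attaching $p$ 2-handles of framing $+1$ along null-homotopic unknots; hence $H_2(W) \cong \mathbb{Z}^p$ with intersection form $+I_p$. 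Capping off the terminal $S^3$ by a 4-ball $B^4$ that contains a slice disk $D$ for $J'$ produces the positon $P := W \cup B^4 \cong p\mathbb{C}P^2 \setminus \mathring{B}^4$.

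Next, I would build an annulus $A \subset W$ with $\partial A = J \sqcup J'$ by taking the union of the product cylinders $K_{i-1} \times I$ in the $i$-th $S^3 \times I$ slab; since each $K_{i-1}$ is disjoint from $U_i$, the annulus avoids all handle cores. A basis for $H_2(W)$ is given by the $(+1)$-spheres obtained by capping each handle core with a Seifert disk of $U_i$, and the algebraic intersection of $A$ with the $i$-th such sphere equals $\mathrm{lk}(K_{i-1}, U_i) = 0$, so $[A] = 0 \in H_2(W, \partial W; \mathbb{Z})$. Therefore $A \cup D$ is a properly embedded, null-homologous disk for $J$ in the positon $P$, contradicting Theorem \ref{thm1}. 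The case of only negative crossing changes is verbatim symmetric, with $(-1)$-surgeries producing a negaton $p\overline{\mathbb{C}P^2} \setminus \mathring{B}^4$ and the same linking-number computation yielding a null-homologous slice disk for $J$ there, again contradicting Theorem \ref{thm1}.

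I expect the only delicate step to be the verification that the cobordism annulus $A$ is null-homologous rel boundary in $W$, which is handled by the intersection computation above; everything else is a direct translation between crossing changes and handle decompositions, followed by a single invocation of Theorem \ref{thm1}.
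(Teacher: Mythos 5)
Your overall strategy (one\nobreakdash-signed crossing changes $\Rightarrow$ a null-homologous slice disk in a definite $4$-manifold $\Rightarrow$ contradiction with Theorem \ref{thm1}) is the same reduction the paper makes, but the key step of your handle-theoretic implementation is not correct as stated, and it breaks exactly at the point you flagged as delicate. First, a terminological symptom: a ``meridional'' unknot has linking number $1$ with the knot, not $0$; what you mean is the circle bounding a disk met twice by the two strands of the crossing. The substantive problem is the claim that a positive crossing change is effected by a $(+1)$-framed $2$-handle along such a circle with $\mathrm{lk}=0$. If the attaching circle has linking number $0$, the two strands pass through its disk with opposite orientations, and a $\pm 1$ twist along that disk inserts a clasp: two new crossings of equal sign between the two strands. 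Since the signed count of inter-strand crossings of the local tangle is an isotopy invariant rel boundary and must drop from $+1$ to $-1$ under the crossing change, the inserted clasp would have to be negative, which forces the right-handed twist, i.e.\ a $(-1)$-framed handle, not a $(+1)$-framed one. But then your cobordism would be negative definite, and the argument would ``prove'' that the right-handed trefoil (which becomes the unknot after a single positive crossing change, and has $\tau=1$) bounds a null-homologous disk in a negaton, contradicting Lemma \ref{lem2}. The way out of this dichotomy is that the genuine realization of a crossing change by a $\pm1$-framed handle uses an attaching circle that links the knot coherently, with $\mathrm{lk}=\pm 2$ (the braid-like picture $\sigma_1^{\mp2}\sigma_1^{\pm1}=\sigma_1^{\mp1}$); and then your trace annulus meets the associated sphere algebraically $\pm2$ times, so the capped disk is \emph{not} null-homologous and Theorem \ref{thm1} does not apply to it. So the null-homology cannot be obtained by the linking-number computation you propose.

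This is precisely the difficulty the paper's Propositions \ref{prop1} and \ref{prop2} are designed to handle: each positive crossing change is converted into a positive kink of an immersed disk, and each kink is then resolved inside a $\mathbb{C}P^2$-summand by a pair of disjoint disks whose classes are $+1$ and $-1$, so that the total relative class stays zero; only after this does the contrapositive of Lemma \ref{lem3} (via Theorem \ref{thm1}) apply. If you want to avoid the $4$-dimensional bookkeeping altogether, the paper's Section 4 gives a second proof of Corollary \ref{cor1} purely from the skein inequalities for $\sigma$ and $\tau$ together with $\sigma(K_{2,2n+1})=-2n<0$ and $\tau(K_{2,2n+1})<0$. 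To salvage your approach you would need to replace the ``$\mathrm{lk}=0$ attaching circle'' step by one of these arguments (or an equivalent blow-up argument that keeps the class trivial), since as written the constructed disk either lives in a manifold of the wrong sign or fails to be null-homologous.
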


\begin{figure}[tbp]
\begin{minipage}[]{0.4\hsize}
\hspace{-1mm}
\includegraphics[scale = 0.7]{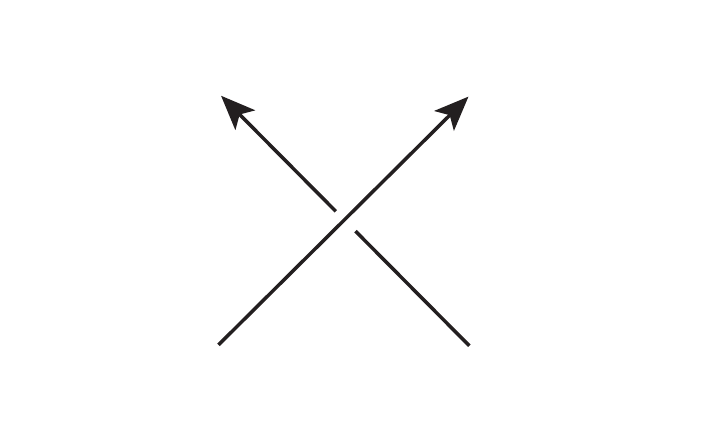}
\caption{\label{pos}}
 \end{minipage}
 \begin{minipage}[]{0.4\hsize}
\hspace{-4mm}
\includegraphics[scale = 0.7]{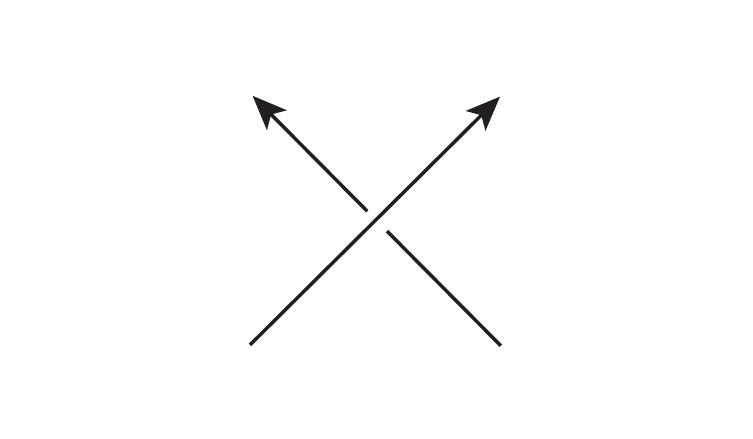}
\caption{\label{neg}}
 \end{minipage}
\end{figure}

In the proof of Theorem \ref{thm1},
we use the $d_1$-invariant of knots besides the $\tau$-invariant.
(The construction of the $d_1$-invariant is described in Section 2.)
We remark that if one wanted to prove only Corollary \ref{cor1}, 
one could use the knot signature $\sigma$ instead of the $d_1$-invariant. 
Furthermore, this alternative proof of Corollary \ref{cor1} enables us
to prove the following theorem.
\begin{thm}
\label{thm2}
For any knot $K$, there exist infinitely many knots
which cannot be deformed into $K$ either by only positive crossing
changes  or by only negative crossing changes.
\end{thm}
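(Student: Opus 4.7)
The plan is to imitate the alternative, signature-based proof of Corollary \ref{cor1} alluded to in the remark, but with the target generalized from an arbitrary slice knot to an arbitrary fixed knot $K$. The input data are the two classical crossing-change inequalities: the Ozsv\'ath--Szab\'o inequality $\tau(K_+) - \tau(K_-) \in \{0,1\}$ for the $\tau$-invariant, and Murasugi's inequality $\sigma(K_-) - \sigma(K_+) \in \{0,2\}$ for the knot signature. Iterating them, a deformation $J \rightsquigarrow K$ by only positive crossing changes forces $\tau(K) \leq \tau(J)$ and $\sigma(K) \geq \sigma(J)$, whereas one by only negative crossing changes forces the reverse inequalities. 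Therefore any knot $J$ satisfying both $\tau(J) < \tau(K)$ and $\sigma(J) < \sigma(K)$ cannot be deformed into $K$ by either type of crossing change: the first strict inequality rules out the positive-only case via $\tau$, and the second rules out the negative-only case via $\sigma$.

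To produce infinitely many such $J$, I would feed into this machine the cable knots from Theorem \ref{thm1}, namely the $(2,2n+1)$-cable $L$ of a companion $K_0$ with $\tau(K_0) < 0$ and $n$ in the admissible range $0 < n < -2\tau(K_0) + (\varepsilon(K_0)-1)/2$. Hom's cable formula for $\tau$ gives $\tau(L) < 0$ (the bound on $n$ is designed to guarantee exactly this), and Litherland's signature formula for $(2,q)$-cables, together with the vanishing of $\sigma_\omega$ at $\omega=1$, gives $\sigma(L) = \sigma(T_{2,2n+1}) = -2n < 0$. Setting $J = L \# K$ and using additivity of $\tau$ and $\sigma$ under connected sum then yields $\tau(J) = \tau(L) + \tau(K) < \tau(K)$ and $\sigma(J) = \sigma(L) + \sigma(K) < \sigma(K)$, as required.

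To guarantee that the resulting family is infinite, I would let the companion $K_0$ range over knots of arbitrarily negative $\tau$ (for instance, iterated connected sums of the left-handed trefoil, which stably have $\varepsilon = -1$); each such $K_0$ supplies a growing range of admissible $n$, and the resulting cables $L$ are pairwise distinguished by $\tau$ or $\sigma$, so the connected sums $J = L \# K$ are also pairwise distinct.

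The main obstacle, modest as it is, is a direction check: one has to verify that the Ozsv\'ath--Szab\'o and Murasugi inequalities push $\tau$ and $\sigma$ in \emph{opposite} directions under a positive crossing change, so that the single condition ``$\tau(J) < \tau(K)$ and $\sigma(J) < \sigma(K)$'' obstructs both deformation types rather than obstructing the same one twice. Once this sign check is in place, Theorem \ref{thm2} reduces to a short combination of Hom's cable formula, Litherland's cable signature formula, and additivity of $\tau$ and $\sigma$ under connected sum.
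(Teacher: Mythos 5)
Your proposal is correct and follows essentially the same route as the paper: the skein inequalities for $\tau$ and $\sigma$ (the paper's Lemmas \ref{lem4} and \ref{lem5}) reduce the problem to producing infinitely many knots $J$ with $\tau(J)<\tau(K)$ and $\sigma(J)<\sigma(K)$, and you build these as connected sums of $K$ with $(2,2n+1)$-cables whose $\tau$ and $\sigma$ are negative by Hom's and Litherland's formulas, exactly as the paper does with $K_i=(T_{2,-2i-3})_{2,3}\# K$. The only cosmetic differences are your choice of companion knots and that you distinguish the resulting knots by their $\tau$ or $\sigma$ values (which works, e.g.\ fixing $n$ and letting the companion's $\tau$ decrease) rather than by Alexander polynomials as the paper does.
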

In the last section, we give such an alternative proof of Corollary \ref{thm1},
and a proof of Theorem \ref{thm2}.

Next we relate Corollary \ref{cor1} to the Gordian graph
$\mathcal{G}$ of knots (i.e., the 1-skeleton of the Gordian complex of knots \cite{hirasawa-uchida}).
The vertex set of $\mathcal{G}$ consists of (oriented) knot types in $S^3$, and 
two knot types are connected by an edge 
if they differ by a single crossing change.
We note that if we orient each edge from positive crossing to negative crossing,
we can naturally regard $\mathcal{G}$ as an oriented graph.
Then Corollary \ref{cor1} implies that there exist infinitely many knots
which are not contained in any flow that goes through some slice knot.

Finally, let us mention the kinkiness of knots as defined by Gompf \cite{gompf}.
Let $K$ be a knot in $S^3= \partial B^4$, and
consider all self-transverse immersed disks in $B^4$ with boundary $K$.
Then we define
$k_+(K)$ (resp.\ $k_-(K)$) to be the minimal number of positive (resp.\ negative) 
self-intersection points occurring in any such disk.
Gompf proved in \cite{gompf} that for any $n \in \mathbb{N}$,
there exists a topologically slice knot $K$ such that $(k_+(K),k_-(K))=(0,n)$.
On the other hand, as far as the author knows,
whether  there exist knots which satisfy $k_+>0$ and $k_->0$ 
has been unknown.
Theorem \ref{thm1} also implies that there exist infinitely many such knots.

\begin{cor}
\label{cor2}
For any knot $K$ with $\tau (K) <0$ and any integer $n$ with
$0 < n < -2\tau(K) + (\varepsilon(K) - 1)/2$, we have
 $k_+(K_{2,2n+1})>0$ and $k_-(K_{2,2n+1})>0$.
\end{cor}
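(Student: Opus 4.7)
The plan is to derive Corollary \ref{cor2} directly from Theorem \ref{thm1} via the following standard implication relating Gompf's kinkinesses to sliceness in positons and negatons: for any knot $K$, if $k_+(K)=0$ then $K$ is slice in some negaton, and if $k_-(K)=0$ then $K$ is slice in some positon. Granting this, the corollary is immediate by contraposition: if $k_+(K_{2,2n+1})$ vanished, then $K_{2,2n+1}$ would bound a null-homologous properly embedded disk in some negaton, contradicting Theorem \ref{thm1}; the case $k_-(K_{2,2n+1})=0$ is analogous with positons.

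To verify the implication (say $k_+(K)=0 \Rightarrow K$ is slice in some negaton), I would begin with an immersed disk $D\subset B^4$ bounding $K$ whose self-intersections are all negative. For each negative double point, perform a local modification: excise a small standard ball in which $D$ appears as the cone on a negative Hopf link, and glue in a punctured $\overline{\mathbb{C}P^2}$ together with embedded disks filling the Hopf link using the exceptional $-1$-sphere. Iterating over all negative double points yields a properly embedded disk in $m\overline{\mathbb{C}P^2}\setminus\mathring{B}^4$ for some $m$, i.e., a negaton. The case $k_-(K)=0$ is symmetric and produces a disk in some positon $m\mathbb{C}P^2\setminus\mathring{B}^4$.

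The main obstacle is arranging the local replacement so that the resulting disk is null-homologous, not merely embedded. A naive blow-up at a double point yields an embedded disk of class $\pm 2[E]$ in the new summand, which is nontrivial. To obtain a null-homologous disk, the blow-up must be combined with additional tubings to the exceptional sphere (or to auxiliary $-1$-spheres in further $\overline{\mathbb{C}P^2}$ summands) so as to cancel the homology contribution. This is a standard maneuver of the kind used throughout the Cochran--Harvey--Horn theory underlying the bipolar filtration; the sign and homology-class bookkeeping is delicate, but no genuinely new ideas are required beyond careful tracking of classes under blow-ups and tubings.
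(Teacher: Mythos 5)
Your overall reduction is exactly the paper's: Corollary \ref{cor2} is deduced from Theorem \ref{thm1} together with the implication that $k_+(K)=0$ forces $K$ to be slice in some negaton and $k_-(K)=0$ forces sliceness in some positon (this is Proposition \ref{prop2} in the paper), followed by contraposition. The gap is in your verification of that implication, which is the real content. First, a sign point: the ``naive blow-up'' (proper transform) of a \emph{negative} double point takes place in a $\mathbb{C}P^2$-summand, not an $\overline{\mathbb{C}P^2}$-summand; in $\overline{\mathbb{C}P^2}\setminus\mathring{B}^4$ there is no resolution of a negative node with total class $\pm2[E]$ at all. Indeed, if $D_1,D_2$ are disjoint properly embedded disks in a punctured definite piece bounding the two components of the Hopf link of sign $\epsilon$, a standard count gives $[D_1]\cdot[D_2]=-lk(\partial D_1,\partial D_2)=-\epsilon$; so for a negative double point one needs $[D_1]\cdot[D_2]=+1$, which in $\overline{\mathbb{C}P^2}\setminus\mathring{B}^4$ forces the classes to be $+[E]$ and $-[E]$ (total class zero), while in $\mathbb{C}P^2\setminus\mathring{B}^4$ it forces total class $\pm2$ times the generator. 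Hence the correct move is not ``blow up, then repair the homology'': it is to exhibit directly a pair of disjoint properly embedded disks in $\overline{\mathbb{C}P^2}\setminus\mathring{B}^4$ bounding $H_-$ with classes $+[E]$ and $-[E]$. This explicit local construction (the motion picture of Figure \ref{hopf_link}, mirrored) is precisely what the paper supplies in the proof of Proposition \ref{prop2}, and it is the step your sketch omits.

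Second, the repair you propose does not work as stated. Tubing the disk into exceptional spheres of \emph{additional} $\overline{\mathbb{C}P^2}$-summands can never cancel a class of the form $\pm2[E]$, since classes supported in distinct summands are linearly independent; the class only acquires new nonzero coordinates. Tubing into (parallel copies of) the exceptional sphere of the same summand is obstructed by embeddedness: the blown-up disk meets $E$ geometrically, and copies of $E$ meet one another because $E\cdot E=-1\neq0$, so the tubed surface acquires new double points and you are back where you started, with no argument that the process terminates. Thus the ``delicate bookkeeping'' you defer is exactly the missing lemma. Once the blow-up-plus-tubing step is replaced by the null-homologous disjoint disk pair bounding the Hopf link in the same-sign punctured projective plane, your argument becomes the paper's proof of Proposition \ref{prop2}, and the deduction of Corollary \ref{cor2} from Theorem \ref{thm1} by contraposition is then exactly as in the paper.
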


\begin{remark}
Marco Golla has pointed out that 
in \cite[Example 5.1]{owens-strle} Owens and Strle show that the knot $T_{3,10}\#T_{5,-6}$ cannot bound a disk with only positive or negative self-intersections.
(Here $T_{p,q}$ denotes the $(p,q)$-torus knot.)
Hence the existence of a knot which satisfies  $k_+>0$ and $k_->0$ 
has been already known.
He also has pointed out that  
using \cite[Proposition 2.1]{owens-strle}, \cite[Corollary 5.1]{bodnar-celoria-golla} can be tweaked to work for the crossing number (giving lower bounds for the number of positive/negative self-intersections). Also, that same example, combined with Lemma 1 in this paper, provides many more examples of knots that are not slice either in positons or in  negatons.
\end{remark}

\begin{acknowledge}
The author was supported by JSPS KAKENHI Grant Number 15J10597.
The author would like to thank his supervisor, Tam\'{a}s K\'{a}lm\'{a}n
for his useful comments and encouragement.
The author also would like to thank Takuji Nakamura, Seiichi Kamada
and Marco Golla 
for their useful comments.
\end{acknowledge}

\section{Proof of Theorem \ref{thm1}}

In this section, we prove Theorem \ref{thm1}.
Before giving the proof,
we recall a few facts about the $d_1$-invariant.

Ozsv\'{a}th and Szab\'{o} \cite{ozsvath-szabo} introduced a $\mathbb{Q}$-valued invariant
$d$ (called the {\it correction term})
for rational homology 3-spheres associated with a Spin$^c$ structure.
In particular, since any integer homology 3-sphere $Y$ has a unique Spin$^c$ structure,
we may denote the correction term simply by $d(Y)$ in this case.
Furthermore, 
we note  that for any integer homology 3-sphere $Y$, $d_1(Y)$ is an even integer. 
Let $S^3_1(K)$ denote the $1$-surgery along  a knot $K$ in $S^3$.
Then $S^3_1(K)$ is an integer homology 3-sphere,
and hence we can define the {\it $d_1$-invariant} of $K$ as $d_1(K):=d(S^3_1(K))$.
It is known that $d_1(K)$ is a knot concordance invariant of $K$.
For details, see \cite{peters}. 

Next, we give two necessary conditions for the sliceness of knots in some negaton.
The first condition is with respect to the $d_1$-invariant.
\begin{lem}
\label{lem1}
If a knot $K$ is slice in some negaton,
then we have $d_1(K) = 0$.
\end{lem}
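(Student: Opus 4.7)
The plan is to sandwich $d_1(K) = d(S^3_1(K))$ between $0$ and $0$ by exhibiting both a negative definite and a positive definite filling of $S^3_1(K)$ and applying the Ozsv\'ath--Szab\'o correction-term inequality to each.

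First I would construct the negative definite filling from the negaton data. Starting with the negaton $V$ and the null-homologous disk $D \subset V$ bounded by $K$, attach a $2$-handle $h$ to $V$ along $K \subset \partial V = S^3$ with framing $+1$ to form a $4$-manifold $W := V \cup h$ with $\partial W = S^3_1(K)$. The union $\Sigma := D \cup \mathrm{core}(h)$ is an embedded $2$-sphere in $W$. The crucial calculation is $[\Sigma]^2 = +1$: because $[D] = 0 \in H_2(V, \partial V; \mathbb{Z})$, the framing on $K$ induced by a trivialisation of the normal bundle of $D$ agrees with the Seifert framing of $K \subset S^3$, so the handle-framing $+1$ translates directly into self-intersection $+1$ for $\Sigma$. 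Blowing down $\Sigma$ then produces a simply connected $4$-manifold $W'$ with $\partial W' = S^3_1(K)$ and $Q_{W'} = Q_V$, so $W'$ is negative definite. Since $S^3_1(K)$ is an integer homology sphere and $Q_{W'}$ is diagonalisable by Donaldson's theorem, one can pick a Spin$^c$ structure $\mathfrak{s}$ on $W'$ with $c_1(\mathfrak{s})^2 = -b_2(W')$, and the Ozsv\'ath--Szab\'o inequality yields
\[
d(S^3_1(K)) \geq \frac{c_1(\mathfrak{s})^2 + b_2(W')}{4} = 0.
\]

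For the reverse inequality I would exploit the universal positive definite filling of $S^3_1(K)$: attaching a $+1$-framed $2$-handle to $B^4$ along $K$ gives a simply connected $4$-manifold $X$ with intersection form $(+1)$ and $\partial X = S^3_1(K)$. Applying the Ozsv\'ath--Szab\'o inequality to $-X$ (which is negative definite, with boundary $-S^3_1(K)$ and characteristic element $c_1 = \pm 1$) gives $d(-S^3_1(K)) \geq 0$, i.e.\ $d_1(K) \leq 0$. Combining the two bounds forces $d_1(K) = 0$.

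The main obstacle is the self-intersection computation $[\Sigma]^2 = +1$, which is precisely where the null-homologous hypothesis on $D$ enters: without it, the framing induced on $K$ by the normal bundle of $D$ could differ from the Seifert framing, $\Sigma$ would fail to be a $(+1)$-sphere, and the blow-down producing a negative definite $W'$ would collapse. I would verify the claim either by a relative intersection-form argument (using $[D] = 0 \in H_2(V,\partial V;\mathbb{Z})$ to define and compute the relative self-intersection of $D$) or by comparing $D$ with a parallel pushoff $D'$ and observing that $\mathrm{lk}(\partial D, \partial D')$ measures the framing discrepancy, which vanishes when $[D] = 0$.
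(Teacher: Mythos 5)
Your proposal is correct and follows essentially the same route as the paper: attach a $(+1)$-framed $2$-handle along $K$, cap the null-homologous disk with the core to get an embedded $(+1)$-sphere, split off a $\mathbb{C}P^2$ summand to obtain a negative definite filling of $S^3_1(K)$, and apply the Ozsv\'ath--Szab\'o obstruction to conclude $d_1(K)\geq 0$. The only difference is that for the inequality $d_1(K)\leq 0$ you reprove it directly via the $(+1)$-trace of $K$ on $B^4$ with reversed orientation, whereas the paper simply cites Peters' general result $d_1(K)\leq 0$; both are fine.
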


\begin{proof}
It is proved in \cite{peters} that $d_1(K) \leq 0$ for any knot $K$.
Hence we only need to show that $d_1(K) \geq 0$. 

Suppose that $K$ is slice in a negaton $V$. Then there exists a
properly embedded null-homologous disk $D$ in $V$ with boundary $K$.
By attaching a $(+1)$-framed 2-handle $h^2$ along $K$, and gluing $D$ with
the core of $h^2$, we obtain an embedded 2-sphere $S$ in $W := V \cup h^2$
with self-intersection $+1$. This implies that there exists a 4-manifold $W'$ 
with boundary $S^3_1(K)$ such that $W = W' \# \mathbb{C}P^2$. 
Note that $\partial W' = \partial W = S^3_1(K)$.
Since the number of positive eigenvalues of the intersection form of $W$ is one,
the intersection form of $W'$ must be negative definite.
Now we use the following theorem.
\begin{thm}[Ozsv\'{a}th-Szab\'{o}, \text{\cite[Corollary 9.8]{ozsvath-szabo}}]
\label{thm d_1}
If $Y$ is an integer homology 3-sphere with $d(Y)<0$, then there is
no negative definite 4-manifold $X$ with $\partial X = Y$.
\end{thm}
By Theorem \ref{thm d_1} and the existence of $W'$, we have $d_1(K) = d(S^3_1(K)) \geq 0$.
\end{proof}

We next show the other necessary condition, which is with respect to the $\tau$-invariant. 

\begin{lem}
\label{lem2}
If a knot $K$ is slice in some negaton,
then we have $\tau(K) \leq 0$.
\end{lem}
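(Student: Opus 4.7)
The plan is to follow the template of Lemma \ref{lem1} and then convert the resulting correction-term data into a bound on $\tau$. Assuming $K$ is slice in a negaton $V$ via a null-homologous disk $D$, I first repeat the construction in Lemma \ref{lem1} verbatim: attach a $(+1)$-framed $2$-handle $h^2$ to $V$ along $K$, cap off $D$ with the core of $h^2$ to produce an embedded $2$-sphere of self-intersection $+1$ in $W := V \cup h^2$, and split off the resulting $\mathbb{C}P^2$ summand to obtain a simply connected, negative definite $4$-manifold $W'$ with $\partial W' = S^3_1(K)$. By Theorem \ref{thm d_1} this forces $d_1(K) = d(S^3_1(K)) \geq 0$.

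The second step is to extract the inequality $\tau(K) \leq 0$ from this information. For that I would invoke the standard Ozsv\'{a}th--Szab\'{o} relation
\[
\tau(K) > 0 \Longrightarrow d_1(K) \leq -2,
\]
which is a consequence of the mapping-cone / large-surgery formula expressing the correction term of $S^3_1(K)$ in terms of $CFK^\infty(K)$. Contrapositively, $d_1(K) \geq 0$ forces $\tau(K) \leq 0$, completing the proof. A parallel route that avoids the cited inequality is to appeal directly to an Ozsv\'{a}th--Szab\'{o}-type genus bound stating that if $K$ bounds a null-homologous properly embedded surface $\Sigma$ in a $4$-manifold with $b_1 = 0$ and $b_2^+ = 0$, then $\tau(K) \leq g(\Sigma)$; applied to $\Sigma = D$ this yields the lemma immediately.

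The main obstacle is invoking the correct form of the $\tau$-versus-correction-term relationship; both routes rely on standard Heegaard Floer machinery, but the precise citation must be chosen so that Theorem \ref{thm d_1} and the $\tau$-bound chain together without a gap. Once that piece is fixed, the geometric input is already supplied by the Lemma \ref{lem1} construction and nothing new has to be built.
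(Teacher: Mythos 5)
Your second (``parallel'') route is precisely the paper's proof: the paper quotes Ozsv\'{a}th--Szab\'{o}'s genus bound for surfaces in negative definite four-manifolds bounding $S^3$ (Theorem \ref{thm tau}), and applies it to the null-homologous slice disk $D$, where $|[D,\partial D]|=0$, $[D,\partial D]\cdot[D,\partial D]=0$ and $g(D)=0$, giving $2\tau(K)\leq 0$ at once. So if you take that route you are done, and no handle attachment is needed at all. Your primary route is genuinely different and heavier: repeating the Lemma \ref{lem1} construction only yields $d_1(K)\geq 0$, and to convert this into $\tau(K)\leq 0$ you assert the implication ``$\tau(K)>0 \Rightarrow d_1(K)\leq -2$'' as a standard consequence of the surgery formula. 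That implication is true, but it is not something you can wave at: it amounts to $d(S^3_1(K))=-2V_0(K)$ together with the statement that $\tau(K)>0$ forces $V_0(K)\geq 1$, which in turn rests on a result such as Hom--Wu's inequality $\nu^+(K)\geq \tau(K)$ (or an equivalent careful analysis of the mapping cone). None of this is available from Theorem \ref{thm d_1} or anything else cited in the paper, so as written the first route has a real citation/argument gap and is in any case far more machinery than the lemma requires. The trade-off: your first route would prove the slightly stronger-looking statement ``$d_1(K)=0$ and $\tau(K)\leq 0$'' from a single construction, but the paper's (and your second) route is self-contained given Theorem \ref{thm tau} and keeps Lemmas \ref{lem1} and \ref{lem2} logically independent, which is how they are used in the proof of Theorem \ref{thm1}.
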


Indeed, Lemma \ref{lem2} immediately follows from the following theorem of
Ozsv\'{a}th-Szab\'{o}. 
Let $V$ be a negaton. According to Donaldson's celebrated theorem,
the intersection form of $V$ is diagonalizable.
Write a homology class $x \in H_2(V, \partial V;\mathbb{Z})$ as
$$
x = s_1 \cdot e_1 + \cdots + s_b \cdot e_b,
$$
where the $e_i$ form an orthonormal basis for $H_2(V,\partial V;\mathbb{Z})$,
and $s_i \in \mathbb{Z}$.
Then we define $\left| x \right| = |s_1| + \cdots + |s_b|$.

\begin{thm}[Ozsv\'{a}th-Szab\'{o}, \text{\cite[Theorem 1.1]{ozsvath-szabo2}}]
\label{thm tau}
Let $V$ be a negaton. If $\Sigma$ is a properly embedded surface
in $V$ with boundary a knot $K$, then we have 
$$
2\tau(K) + \left| [\Sigma, \partial \Sigma] \right| 
+ [\Sigma, \partial \Sigma] \cdot [\Sigma, \partial \Sigma] \leq 2g(\Sigma),
$$
where $g(\Sigma)$ denotes the genus of $\Sigma$.
\end{thm}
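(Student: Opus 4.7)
The plan is to exploit Heegaard Floer theory, specifically the behaviour of the Ozsv\'{a}th-Szab\'{o} $\tau$-invariant under cobordisms with $b_2^+ = 0$. First I would remove a small open 4-ball from the interior of $V$, pushing $\Sigma$ off this ball, so as to view $V$ as a cobordism $W \colon S^3 \to S^3$ with $b_2^+(W) = 0$ that contains $\Sigma$ as a properly embedded surface with boundary $K$ on the outgoing $S^3$ and empty boundary on the incoming side. The question thus becomes a cobordism-type inequality for $\tau$.

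To establish that cobordism inequality, I would pass to large surgery: attach a 2-handle along $K \subset \partial V$ with framing $-N$ for $N \gg 0$, producing a negative-definite, simply connected 4-manifold $V_N$ with $\partial V_N = S^3_{-N}(K)$ in which $\Sigma$ closes up (via the core disk of the 2-handle) to a closed surface $\hat\Sigma$ of genus $g(\Sigma)$ and self-intersection $[\Sigma,\partial\Sigma]^2 - N$. The key input is the large-surgery formula of Ozsv\'{a}th-Szab\'{o}, which for $N \gg 0$ identifies $\tau(K)$ with the absolute $\mathbb{Q}$-grading of a distinguished nonzero element of $\widehat{HF}(S^3_{-N}(K))$. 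Combining this with the grading-shift formula
\[
\mathrm{gr}\bigl(F_{V_N\setminus B^4,\mathfrak{s}}(\xi)\bigr) - \mathrm{gr}(\xi) = \frac{c_1(\mathfrak{s})^2 - 2\chi(V_N) - 3\sigma(V_N)}{4}
\]
for the cobordism map on $\widehat{HF}$, together with the non-vanishing of this map for a suitable $\mathrm{Spin}^c$ structure $\mathfrak{s}$, one extracts (after the $N$-dependent terms cancel) an inequality of the shape
\[
2\tau(K) - 2g(\Sigma) \leq \langle c_1(\mathfrak{s})|_V,\,[\Sigma,\partial\Sigma]\rangle - [\Sigma,\partial\Sigma]\cdot[\Sigma,\partial\Sigma],
\]
valid for every characteristic cohomology class $c_1(\mathfrak{s})$ on $V$.

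Finally, I would optimize over $\mathfrak{s}$. By Donaldson's diagonalization, fix an orthonormal basis $\{e_i\}$ of $H_2(V;\mathbb{Z})$ with $e_i\cdot e_j = -\delta_{ij}$ and write $[\Sigma,\partial\Sigma] = \sum s_i e_i$. Choosing a characteristic $c_1(\mathfrak{s})$ with $\langle c_1(\mathfrak{s}), e_i\rangle = -\mathrm{sgn}(s_i)$ (and $\pm 1$ when $s_i = 0$) yields $\langle c_1(\mathfrak{s}), [\Sigma,\partial\Sigma]\rangle = -|[\Sigma,\partial\Sigma]|$, and substitution into the previous display gives exactly the claimed bound. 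The main obstacle will be establishing the non-vanishing of the cobordism map $F_{V_N\setminus B^4,\mathfrak{s}}$ on $\widehat{HF}$ for the chosen $\mathfrak{s}$; this rests on general non-triviality theorems for cobordism maps induced by negative-definite 4-manifolds with $b_1 = 0$, together with the explicit identification of $\tau$ via large surgeries, both developed in the work of Ozsv\'{a}th and Szab\'{o}.
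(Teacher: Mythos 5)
This statement is not proved in the paper at all: it is quoted verbatim, with attribution, as Theorem 1.1 of Ozsv\'ath--Szab\'o \cite{ozsvath-szabo2}, so there is no internal argument to compare against; your proposal is in effect a reconstruction of that external proof. Its general strategy is indeed the right one (attach a $2$-handle along $K$ with large negative framing, cap $\Sigma$ off with the core, use the absolute grading shift for cobordism maps, and optimize over characteristic vectors of the Donaldson-diagonalized form), and your final optimization step, choosing $\langle c_1(\mathfrak{s}),e_i\rangle=-\mathrm{sgn}(s_i)$ so that $\langle c_1(\mathfrak{s}),[\Sigma,\partial\Sigma]\rangle=-\left|[\Sigma,\partial\Sigma]\right|$, is exactly how the $\left|\cdot\right|$ term arises.

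The genuine gap is at the step you defer. ``General non-triviality theorems for cobordism maps induced by negative-definite $4$-manifolds with $b_1=0$'' live at the level of $HF^\infty$ (equivalently, they yield correction-term inequalities of precisely the kind used in Lemma \ref{lem1} via Theorem \ref{thm d_1}); the hat-level maps of a negative-definite cobordism can perfectly well vanish, and $HF^\infty$-level information is blind to $\tau$ and to $g(\Sigma)$. The actual content of the theorem is the knot-filtration/large-surgery input: for $N\gg 0$, $\widehat{HF}(S^3_{-N}(K))$ in the $\mathrm{Spin}^c$ structure labelled by $m$ is identified with the homology of a filtered subquotient of $\widehat{CF}(S^3)$, the two-handle map is identified with the corresponding inclusion/projection map, and $\tau(K)$ is exactly the threshold value of $m$ at which this map becomes trivial; the genus of the capped-off surface $\widehat\Sigma$ then constrains which labels $m$ (i.e.\ which evaluations $\langle c_1(\mathfrak{s}),[\widehat\Sigma]\rangle$) are available. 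That is where both $\tau(K)$ and $-2g(\Sigma)$ enter your intermediate inequality, and as written that display (with $\langle c_1(\mathfrak{s})|_V,[\Sigma,\partial\Sigma]\rangle$ but no $c_1^2$-type contribution from the handle class, and with the $-2g(\Sigma)$ term unexplained) is asserted rather than derived. To make the argument complete you would have to reproduce the surgery/filtration analysis of \cite{ozsvath-szabo2} (see also \cite{peters} for a recapitulation), not merely cite negative-definite non-vanishing results; one must also check that the chosen extension of the characteristic vector over the $2$-handle is itself characteristic and compatible with the label $m$ used in the non-vanishing statement.
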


\def\proofname{Proof of Lemma \ref{lem2}}
\begin{proof}
Since $K$ is slice in some negaton $V$, there exists a properly embedded disk $D$ in $V$
with boundary $K$ such that $[D,\partial D] = 0 \in H_2(V,\partial V; \mathbb{Z})$. Obviously
we have $|[D,\partial D]|=0$, $[D,\partial D]\cdot[D,\partial D]=0$ and $g(D)=0$,
and by Theorem \ref{thm tau}, we obtain the inequality $2\tau(K) \leq 0$.
\end{proof}

In the case of the $\tau$-invariant, we can derive a necessary condition
for the sliceness of knots in some positon.

\begin{lem}
\label{lem3}
If a knot $K$ is slice in some positon,
then we have $\tau(K) \geq 0$.
\end{lem}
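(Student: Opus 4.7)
The plan is to reduce Lemma \ref{lem3} to Lemma \ref{lem2} by an orientation-reversal symmetry. If $V$ is a positon, I would consider the same smooth $4$-manifold equipped with the opposite orientation, which I denote $-V$. Its intersection form is negated, so it is negative definite; simple connectivity is unaffected by orientation reversal. Hence $-V$ is a negaton, and its boundary $-S^3$ is identified with $S^3$ via an orientation-reversing diffeomorphism.

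Next I would observe that if $D \subset V$ is a properly embedded disk with $\partial D = K$ and $[D,\partial D] = 0$, then the very same $D$ sits in $-V$ as a properly embedded disk; the integral homology class $[D,\partial D] \in H_2(V,\partial V;\mathbb{Z}) = H_2(-V,\partial(-V);\mathbb{Z})$ is the same, so $D$ is still null-homologous there. Under the orientation-reversing identification $-S^3 \cong S^3$, an oriented knot maps to its mirror image, so the boundary of $D$, viewed in $\partial(-V) \cong S^3$, is the mirror $m(K)$. Hence $m(K)$ is slice in the negaton $-V$.

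Finally, I would apply Lemma \ref{lem2} to $m(K)$ to obtain $\tau(m(K)) \leq 0$, and invoke the standard identity $\tau(m(K)) = -\tau(K)$ for the Ozsv\'{a}th--Szab\'{o} $\tau$-invariant to conclude $\tau(K) \geq 0$. There is essentially no obstacle in this argument; the only points requiring care are the identification of $K$ with $m(K)$ under the orientation reversal of the boundary, and the sign behavior of $\tau$ under mirroring, both of which are standard. In fact, the same orientation-reversal trick would also upgrade Lemma \ref{lem1} to a matching statement for positons (replacing $d_1$ by its natural counterpart using $(-1)$-surgery), which is why the main theorem can treat positons and negatons symmetrically.
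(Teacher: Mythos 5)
Your proposal is correct and follows essentially the same route as the paper: reverse the orientation of the positon to obtain a negaton in which the mirror image $K^*$ is slice, apply Lemma \ref{lem2}, and use $\tau(K^*) = -\tau(K)$ to conclude $\tau(K) \geq 0$.
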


\def\proofname{Proof}
\begin{proof}
If there exists a slice disk in a positon $V$ with boundary $K$,
then by reversing the orientation of $V$,
we obtain a slice disk in $-V$ with boundary $K^*$,
where $K^*$ denotes the mirror image of $K$.
Since $-V$ is a negaton
and $\tau$ is a group homomorphism from the smooth knot concordance group
to $\mathbb{Z}$, Lemma \ref{lem2} gives 
$$
0 \geq \tau(K^*) = - \tau(K).
$$ 
\end{proof}

\def\proofname{Proof of Theorem \ref{thm1}}
\begin{proof}
Let $K$ be a knot with $\tau (K) <0$ and $n$ an integer with
$0 < n < -2\tau(K) + (\varepsilon(K) - 1)/2$.
In order to apply Lemma \ref{lem1} and Lemma \ref{lem3},
we estimate $\tau (K_{2,2n+1})$ and $d_1(K_{2,2n+1})$.
Indeed, such estimates have been already given by the author \cite{sato} and Hom \cite{hom}. 
The estimate of $d_1(K_{2,2n+1})$ is as follows.
\begin{thm}[\text{\cite[Theorem 1]{sato}}]
For any knot $K$ in $S^3$ and any $k \in \mathbb{N}$, we have
$$
d_1(K_{2,4k \pm 1}) \leq -2k.
$$
\end{thm}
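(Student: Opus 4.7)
The plan is to pass from $d_1$ to the knot Floer invariant $V_0$, and then to bound $V_0$ of the cable using a cabling formula for the knot Floer complex.

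The first step is to invoke the Ni--Wu correction-term formula for $+1$-surgery: for any knot $K$,
\[
d_1(K) = d(S^3_{+1}(K)) = -2 V_0(K),
\]
where $V_0(K) \in \mathbb{Z}_{\geq 0}$ is the local $h$-invariant extracted from the filtered subquotient $A_0^+ \subset CFK^\infty(K)$. Under this reformulation the theorem reduces to the assertion
\[
V_0(K_{2, 4k \pm 1}) \geq k.
\]

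The second step is to apply Hom's explicit model for $CFK^\infty(K_{2,q})$ in the $(2,q)$-cable case with $q$ odd in order to extract a uniform lower bound
\[
V_0(K_{2, 2m+1}) \geq V_0(T_{2, 2m+1}).
\]
The idea is that Hom's model presents $CFK^\infty(K_{2,2m+1})$ as a pattern-type staircase decorated with shifted copies of $CFK^\infty(K)$; one then identifies a distinguished staircase subcomplex isomorphic to the knot Floer complex of the torus knot $T_{2,2m+1}$ whose contribution to $A_0^+$ already realizes the desired height.

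The third step is the routine computation $V_0(T_{2, 2m+1}) = \lceil m/2 \rceil$, which follows from the staircase presentation of $CFK^\infty$ for the $L$-space knot $T_{2, 2m+1}$ via its Alexander polynomial. Setting $2m+1 = 4k \pm 1$, so $m \in \{2k-1, 2k\}$, yields $\lceil m/2 \rceil = k$ in both cases, completing the bound.

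The main obstacle will be Step 2: establishing uniformity in the companion. When $\varepsilon(K) = 0$, Hom's cabling model collapses to the pattern and the inequality is immediate; when $\varepsilon(K) = \pm 1$, the companion contributes extra blocks to the cable complex, and one must verify that these blocks can only increase, never decrease, $V_0$. This is a careful filtration-level check within Hom's model, and is where the bulk of the argument lies.
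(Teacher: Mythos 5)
Your Steps 1 and 3 are fine: $d_1(K)=d(S^3_1(K))=-2V_0(K)$ is the standard Ni--Wu identity, and $V_0(T_{2,2m+1})=\lceil m/2\rceil$ follows from the torus-knot staircase, so the theorem is indeed equivalent to the uniform bound $V_0(K_{2,4k\pm1})\geq k$. But be aware that the present paper does not prove this statement at all -- it is imported verbatim from \cite[Theorem 1]{sato} -- so what you are really proposing is a new proof of that external theorem, and your Step 2 is exactly its entire content.

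That is where the genuine gap lies. There is no ``explicit model for $CFK^\infty(K_{2,q})$'' in Hom's paper \cite{hom}: her bordered-Floer computation recovers only enough of the invariant to determine $\tau$ (via $\varepsilon$) of the cable, not the full filtered chain homotopy type, and in particular it gives no access to $A_0^+$, $V_0$, or $d_1$ of $K_{2,q}$. For the same reason your remark that the $\varepsilon(K)=0$ case ``collapses to the pattern'' is unfounded -- Hom's $\varepsilon=0$ statement is only the equality $\tau(K_{p,q})=\tau(T_{p,q})$, and says nothing about $V_0$. Moreover, even if you substitute a genuine cabling model for the full complex (Hedden's computations, or an immersed-curve cabling formula), exhibiting a staircase subcomplex isomorphic to $CFK^\infty(T_{2,2m+1})$ does not yield $V_0(K_{2,2m+1})\geq V_0(T_{2,2m+1})$: $V_0$ is read off from the grading of the $U$-tower in $H_*(A_0^+)$, and a subcomplex inclusion need not induce an isomorphism on localized homology nor control where that tower sits, so no inequality for $V_0$ follows in either direction from such an inclusion alone. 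One would need either a direct computation of $H_*(A_0)$ of the cable or a grading-controlled (local-map or definite-cobordism) argument uniform in the companion -- and establishing that uniform statement is precisely the nontrivial work done in \cite{sato}, which your outline defers rather than supplies.
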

In particular, we have $d_1(K_{2,2n+1}) <0$, and Lemma \ref{lem1}
implies that $K_{2,2n+1}$ cannot be slice in any negaton.
Next we derive an estimate of $\tau(K_{2,2n+1})$ from the following theorem.
\begin{thm}[Hom, \text{\cite[Theorem 1]{hom}}]
\label{thm.hom}
Let $K$ be a knot in $S^3$ and $p>1$. Then $\tau(K_{p,q})$
is determined in the following manner.
\begin{enumerate}
\item If $\varepsilon(K)=1$, then $\tau(K_{p,q}) = p\tau(K) + (p-1)(q-1)/2$.
\item If $\varepsilon(K)=-1$, then $\tau(K_{p,q}) = p\tau(K) + (p-1)(q+1)/2$.
\item If $\varepsilon(K)=0$, then $\tau(K_{p,q}) = \tau(T_{p,q})$.
\end{enumerate}
\end{thm}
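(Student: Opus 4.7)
The plan is to compute $\widehat{HFK}(K_{p,q})$ via bordered Heegaard Floer homology and then extract $\tau(K_{p,q})$ from the Alexander grading of a distinguished cycle. By the pairing theorem of Lipshitz--Ozsv\'{a}th--Thurston, one has
$$
\widehat{CFK}(S^3,K_{p,q}) \simeq \widehat{CFA}(S^1 \times D^2, P_{p,q}) \boxtimes \widehat{CFD}(S^3 \setminus \nu(K)),
$$
where $P_{p,q}$ is the $(p,q)$-torus knot pattern inside the solid torus. The strategy is to compute each factor in just enough detail to pin down $\tau$.

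First, I would describe $\widehat{CFD}(S^3 \setminus \nu(K))$ in terms of a vertically (and, separately, horizontally) simplified basis $\{x_i\}$ of $CFK^{\infty}(K)$. The bordered invariant has generators coming from the $x_i$ together with an ``unstable chain'' of auxiliary generators, and its differential records the vertical and horizontal arrows of $CFK^{\infty}(K)$. The crucial point is that the precise length and orientation of this unstable chain depend on whether the generator realizing $\tau(K)$ is also the endpoint of a horizontal arrow pointing away from it, into it, or neither. This trichotomy is exactly what $\varepsilon(K) \in \{+1,-1,0\}$ records.

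Second, I would compute $\widehat{CFA}(S^1 \times D^2, P_{p,q})$, a finite computation producing $p$ generators related by explicit $\mathcal{A}_\infty$ operations encoding the $(p,q)$-twisting. Taking the box tensor product with $\widehat{CFD}$ then yields a model for $\widehat{CFK}(K_{p,q})$, inside which one isolates a distinguished cycle $\xi$ whose Alexander grading equals $\tau(K_{p,q})$. That grading splits as (i) $p$ times the Alexander grading of whichever $x_i$ participates, contributing $p\tau(K)$, plus (ii) a standard pattern-theoretic shift of $(p-1)(q\pm 1)/2$. The sign of $q\pm 1$ and the choice of $x_i$ are dictated by $\varepsilon(K)$: when $\varepsilon(K)=+1$, the generator contributing $\tau(K)$ is the tail of an outgoing horizontal arrow in the unstable chain, forcing the $(q-1)$ shift; when $\varepsilon(K)=-1$, it is the head of an incoming horizontal arrow, forcing the $(q+1)$ shift.

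When $\varepsilon(K)=0$, the generator carrying $\tau(K)$ is isolated in both the horizontal and vertical directions, and in the box product it decouples from the nontrivial pattern arrows. The distinguished cycle then necessarily comes from the pattern side alone, and its Alexander grading reproduces the value of $\tau$ for the pattern placed inside the unknotted solid torus, namely $\tau(T_{p,q})$. The hard part throughout is bookkeeping: writing down $\widehat{CFA}$ of the pattern and the unstable chains of $\widehat{CFD}$ in sufficient detail to track $\xi$, and verifying that the arithmetic of its Alexander grading reproduces the three stated formulas exactly without a sign error.
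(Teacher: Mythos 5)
The paper does not actually prove this statement: it is Hom's theorem, quoted verbatim from \cite{hom}, so the ``paper's own proof'' is a citation, and your outline should be compared against Hom's original argument. Measured against that, your sketch is essentially faithful: Hom does compute $\widehat{CFD}$ of the knot complement from a suitably simplified basis of $CFK^\infty(K)$, pairs it with $\widehat{CFA}$ of the cable pattern in the solid torus via the Lipshitz--Ozsv\'{a}th--Thurston pairing theorem, and the trichotomy in the theorem comes precisely from how $\varepsilon(K)$ governs the interaction of the distinguished generator with horizontal arrows and the unstable chain (whose shape also depends on the comparison of the framing with $2\tau(K)$; Hom first treats framings realizing $(p,pn+1)$-cables and then deduces general $q$). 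Two cautions if you were to execute this: first, $\tau(K_{p,q})$ is \emph{not} determined by $\widehat{HFK}(K_{p,q})$ as a bigraded group, so ``the Alexander grading of a distinguished cycle'' must be interpreted in the filtered chain homotopy type---concretely, one must identify the generator of the vertical homology of the box tensor product, not merely a homology class of $\widehat{HFK}$; second, your dictionary for $\varepsilon=\pm 1$ (tail of an outgoing versus head of an incoming horizontal arrow) appears to be reversed relative to Hom's conventions, and since the entire content of the $(q-1)/2$ versus $(q+1)/2$ discrepancy is exactly this bookkeeping, that is the step where the proof would live or die. As a blind reconstruction of the cited proof's architecture it is correct; as a proof it defers all of the load-bearing computation ($\widehat{CFA}$ of the pattern, the $\mathcal{A}_\infty$ operations, and the grading arithmetic) to ``bookkeeping.''
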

In particular, Theorem \ref{thm.hom} implies that if $\varepsilon (K) \neq 0$,
we have the equality
$$
\tau(K_{p,q}) = p\tau(K) + (p-1)(q-\varepsilon(K))/2.
$$
In our case, note that $\varepsilon (K) \neq 0$ since $\tau(K) <0$,
and hence we have
$$
\tau(K_{2,2n+1}) = 2\tau(K) + (2-1)(2n+1-\varepsilon (K))/2 = 2\tau(K) + n - 
(\varepsilon (K) -1)/2 < 0.
$$
Thus by Lemma \ref{lem3},
$K_{2,2n+1}$ cannot be slice in any positon.
\end{proof}

\section{Proofs of corollaries}
In this section, we prove the two corollaries stated in Section 1.
We start with the following two propositions,
which imply that the two corollaries in Section 1 naturally follow from Theorem \ref{thm1}.
 
\begin{prop}
\label{prop1}
If a knot $K$ can be deformed into a slice knot by only positive (resp.\ negative) crossing changes, 
then $k_-(K) =0$ (resp.\ $k_+(K)= 0$).
\end{prop}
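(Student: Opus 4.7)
The plan is to realize the sequence of crossing changes as an immersed cobordism in $S^3 \times [0,1]$ and then cap off with a slice disk to obtain a self-transverse immersed disk in $B^4$ bounded by $K$ whose self-intersections all have one sign. I shall treat the positive case; the negative case is symmetric. Suppose $K = K_0 \to K_1 \to \cdots \to K_m = K'$ is a sequence of positive crossing changes, where $K'$ is slice. For each step $K_i \to K_{i+1}$, a local model near the affected crossing produces an immersed annulus in $S^3 \times [i/m,(i+1)/m]$ with exactly one transverse double point: the two strands forming the positive crossing are pushed through one another, producing a single intersection in the trace of the isotopy. Concatenating these annuli yields an immersed annulus $A \subset S^3 \times [0,1]$ from $K$ to $K'$ with exactly $m$ transverse double points.

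Since $K'$ is slice, there is a smoothly embedded disk $D' \subset B^4$ with $\partial D' = K'$. Identifying $B^4$ with $(S^3 \times [0,1]) \cup B^4$ glued along $S^3 \times \{1\}$, I form $D := A \cup D'$. Because $D'$ is embedded, the self-intersections of $D$ are exactly the double points of $A$, so $D$ is a self-transverse immersed disk in $B^4$ with boundary $K$ whose self-intersections all have the same sign as the double points of $A$.

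The main obstacle is the local sign computation showing that a positive crossing change produces a positive double point in the trace (and a negative crossing change a negative one). After orienting a small neighbourhood of the crossing, one compares the ordered basis obtained from the two tangent planes at the double point---each spanned by a tangent vector along a knot strand together with a tangent vector combining the time direction with the isotopy---against the orientation of $S^3 \times [0,1]$. Once this sign matching is verified, the disk $D$ has no negative self-intersections, so $k_-(K) = 0$; reversing the roles of positive and negative throughout gives the parenthetical statement.
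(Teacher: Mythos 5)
Your argument is essentially the paper's own proof: the paper realizes each positive crossing change by the motion picture of Figure \ref{crossing_kink}, which is exactly your local immersed annulus in $S^3\times[i/m,(i+1)/m]$ with a single positive double point, and then caps off the resulting slice knot with its slice disk. The sign matching you flag as the remaining step is the content of that figure (and is consistent with the rest of the paper, where positive kinks are the ones resolved in $\mathbb{C}P^2$), so your proposal is correct and follows the same route.
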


\def\proofname{Proof}
\begin{proof}
Suppose that a knot $K$ can be deformed into a slice knot 
by only positive crossing changes.
Then we can construct a self-transverse immersed disk
in $B^4$ with boundary $K$ which has no negative kinks, by assigning to each positive crossing changes the motion picture shown in Figure $\ref{crossing_kink}$ and capping off the 
resulting slice knot with its slice disk.
This implies that $k_-(K)=0$.
We can also prove in the same way that
if $K$ can be unknotted by only negative crossing changes, then $k_+(K)=0$.
\end{proof}

\begin{figure}[tbp]
\begin{center}
\includegraphics[scale = 0.7]{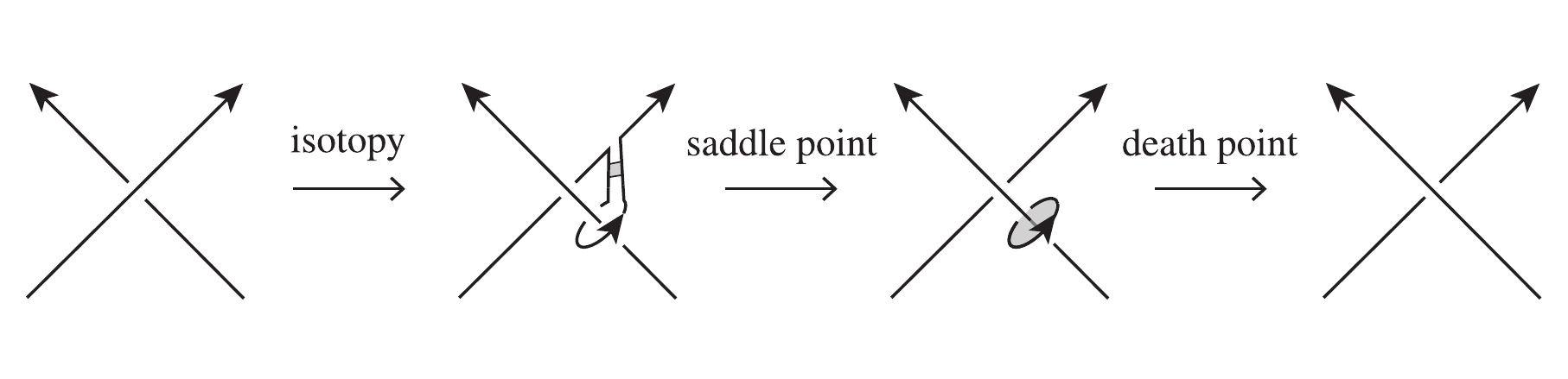}
\caption{\label{crossing_kink}}
\end{center}
\end{figure}

\begin{prop}
\label{prop2}
If $k_-(K) =0$ (resp.\ $k_+(K)= 0$), then $K$ is slice in 
$m\mathbb{C}P^2 \setminus \mathring{B^4}$
(resp.\ $m\overline{\mathbb{C}P^2} \setminus \mathring{B^4}$)
for some $m \in \mathbb{N}$.
\end{prop}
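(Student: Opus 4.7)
The plan is to resolve each positive self-intersection of the immersed disk realizing $k_-(K)=0$ using a local blow-up of $B^4$, ending with an embedded null-homologous disk in $m\mathbb{C}P^2 \setminus \mathring{B^4}$.

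By hypothesis there is a self-transverse immersed disk $D \subset B^4$ with $\partial D = K$, having $m \ge 0$ positive self-intersections $p_1,\dots,p_m$ and no negative ones. I would argue by induction on $m$; the base case $m = 0$ is immediate, since $D$ is then an embedded (automatically null-homologous, as $H_2(B^4,S^3)=0$) disk in $B^4$.

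For the inductive step, pick one positive double point $p$ of $D$ and choose a small 4-ball $B$ around $p$ in which $D$ appears as two transversely intersecting oriented 2-disks meeting $\partial B \cong S^3$ in a positive Hopf link. Perform a blow-up at $p$: excise $\mathring{B}$, glue in a copy of $\mathbb{C}P^2 \setminus \mathring{B^4}$ along the common $S^3$-boundary, and replace the two intersecting sheets of $D$ in $B$ by a pair of disjoint, properly embedded 2-disks in $\mathbb{C}P^2 \setminus \mathring{B^4}$ whose boundaries realize the same positive Hopf link. Iterating $m$ times yields an ambient manifold diffeomorphic to $B^4 \# m\mathbb{C}P^2 = m\mathbb{C}P^2 \setminus \mathring{B^4}$, and the modified surface $D'$ is embedded with $\partial D' = K$. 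A quick Euler characteristic tally (two intersecting disks of total $\chi=2$ replaced by two disjoint disks of total $\chi=2$) shows $D'$ is indeed a disk, not a surface of higher genus.

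The main obstacle, which I expect to absorb the bulk of the work, is choosing the pair of replacement disks at each double point so that the total relative homology class $[D', \partial D'] \in H_2(m\mathbb{C}P^2 \setminus \mathring{B^4}, S^3; \mathbb{Z}) \cong \mathbb{Z}^m$ vanishes. The two naive candidates for the local replacement---two fibers of the disk-bundle description $\mathbb{C}P^2 \setminus \mathring{B^4} = D(\mathcal{O}(1))\to S^2$, or a pushed-in Seifert disk paired with a 2-handle core---both match the positive Hopf link on the boundary but yield nonzero contribution in each $\mathbb{C}P^2$ summand. The required cancellation can be arranged by tubing the resulting embedded disk along the exceptional sphere in each summand a controlled number of times; each such tube adjusts one $\mathbb{Z}$-coordinate of $[D',\partial D']$ independently while leaving $\partial D'=K$ intact, and can be routed to preserve embeddedness. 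A careful orientation analysis at each double point---here the positivity of the self-intersection is essential---ensures the corrections are consistent. The dual claim for $k_+(K) = 0$ follows by the same argument applied either to the mirror knot $K^*$ or verbatim with $\overline{\mathbb{C}P^2}$-summands in place of $\mathbb{C}P^2$-summands.
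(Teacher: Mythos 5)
Your decomposition is the same as the paper's: excise a small ball about each positive double point, note that the resulting link on each small boundary sphere is the positive Hopf link $H_+$, glue in a copy of $\mathbb{C}P^2\setminus\mathring{B}^4$ together with two disjoint properly embedded disks bounded by $H_+$, and check via an Euler characteristic count that the new surface is again a disk. The genuine gap is in the only step that carries content, namely producing local disks whose \emph{total} relative class vanishes. Your proposed repair---first cap with ``naive'' disks of nonzero total class and then tube $D'$ along exceptional spheres---does not work. To change $[D',\partial D']$ you must tube with an embedded sphere in a class $\pm h_i$; but as long as the $h_i$-coefficient of $[D']$ is nonzero, every sphere in that class has nonzero algebraic intersection with $D'$ and therefore meets it, and tubing into a sphere that meets the surface does not preserve embeddedness (the parallel copy of the sphere used for the tube meets the sphere itself, since $h_i^2=1$, and the remaining intersection points with $D'$ survive as self-intersections of the new surface). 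So the homological correction and embeddedness are in direct conflict, and ``can be routed to preserve embeddedness'' is precisely the assertion that fails. There is also an orientation slip in your discussion of the candidates: two fibres of $D(\mathcal{O}(1))$ (equivalently, two parallel cocores of the $+1$-framed $2$-handle, or two complex lines with a ball about their common point removed) bound the \emph{negative} Hopf link in the oriented boundary of $\mathbb{C}P^2\setminus\mathring{B}^4$, not $H_+$, so they do not even match the boundary data of a positive double point.

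The correct route, and the one the paper takes, is to choose the local pair correctly at the outset: its motion picture exhibits two disjoint disks in $\mathbb{C}P^2\setminus\mathring{B}^4$ bounded by $H_+$ whose union is null-homologous rel boundary. Concretely, one may take two complex lines through the blown-up point, reverse the orientation of one so that they meet in a single \emph{negative} point, and delete a ball about that point: the two leftover disks are disjoint, bound $H_+$ in $\partial(\mathbb{C}P^2\setminus\mathring{B}^4)$, and lie in classes $+h$ and $-h$, so each local contribution is already zero and no later correction is needed. (In fact, for disjoint properly embedded surfaces in a $4$-manifold bounded by $S^3$, the intersection number of the relative classes equals minus the linking number of the boundaries, so \emph{any} disjoint pair of disks bounding $H_+$ in $\mathbb{C}P^2\setminus\mathring{B}^4$ is forced to have classes $+h$ and $-h$; but one still has to exhibit such a pair, which your argument never does.) With that local input your induction goes through; without it the null-homology of $D'$ is not established.
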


\begin{proof}
Let $D$ be a self-transverse immersed disk in a 4-ball $B$ with boundary $K$
which has no negative kink 
and $m$ positive kinks $p_1, p_2, \ldots, p_m$ $(m \in \mathbb{N} )$.
Then by taking a small open 4-ball $B_i$ containing $p_i$ $(i = 1, \ldots , m)$
and removing $\cup^{m}_{i=1}B_i$ from $B$, we obtain 
a properly embedded planar surface $S$ in $B \setminus \cup^{m}_{i=1}B_i$
with $2m+1$ boundary components
such that $S \cap \partial B = K$ and 
$(S \cap  \partial B_i) \subset \ \partial B_i$ is the positive Hopf link $H_+$ for any $i$.
Furthermore, the motion picture shown in Figure \ref{hopf_link} gives mutually distinct  
disks $E$ in $\mathbb{C}P^2 \setminus B^4$ with boundary $H_+$
such that $[E, \partial E] = 0 \in H_2(\mathbb{C}P^2 \setminus B^4 , S^3 ; \mathbb{Z})$.
Gluing $(\mathbb{C}P^2 \setminus B^4, E)$ with $(B \setminus \cup^{m}_{i=1}B_i, S)$
along $(\partial B_i, H_+)$ for each $i = 1, \ldots, m$,
we obtain a null-homologous disk $D'$ in $m\mathbb{C}P^2 \setminus B^4$
with boundary $K$. In particular, $K$ is slice in $m\mathbb{C}P^2 \setminus B^4$.
We can also prove in the same way that if $k_+(K) = 0$,
then $K$ is slice in $m \overline{\mathbb{C}P^2} \setminus B^4$ for some $m \in \mathbb{N}$.
\end{proof}

\begin{figure}[tbp]
\begin{center}
\includegraphics[scale = 0.7]{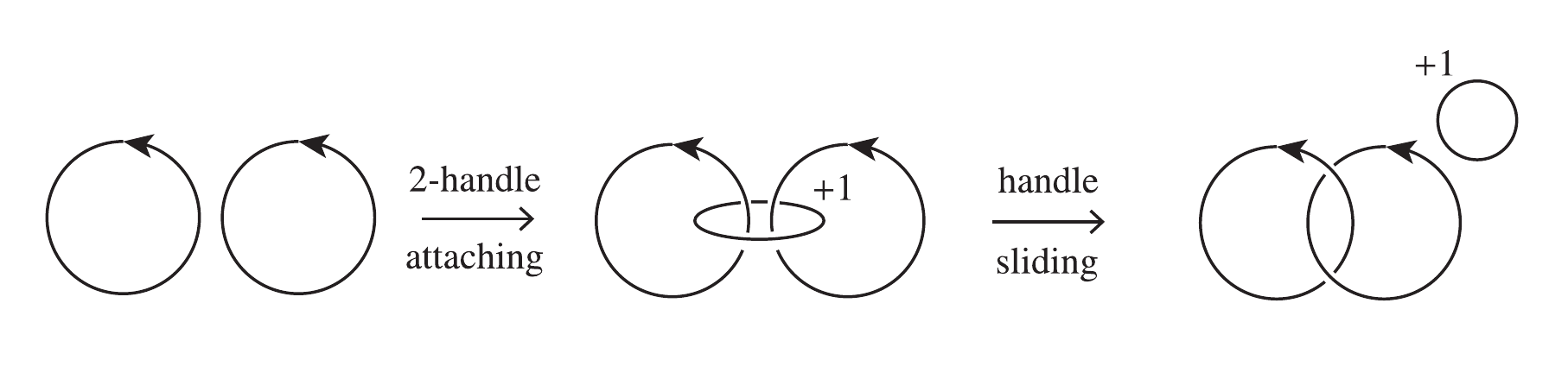}
\caption{\label{hopf_link}}
\end{center}
\end{figure}

\def\proofname{Proof of Corollary \ref{cor2}}
\begin{proof}
Let $K$ be a knot with $\tau (K) <0$ and $n$ an integer with
$0 < n < -2\tau(K) + (\varepsilon(K) - 1)/2$.
Then by Theorem \ref{thm1}, $K$ is not slice either in positons or in negatons.
Hence the contrapositive of Proposition \ref{prop2} implies
that  $k_+(K_{2,2n+1})>0$ and $k_-(K_{2,2n+1})>0$.
\end{proof}

\def\proofname{Proof of Corollary \ref{cor1}}
\begin{proof}
Let $K$ be a knot with $\tau (K) <0$ and $n$ an integer with
$0 < n < -2\tau(K) + (\varepsilon(K) - 1)/2$.
Then by Corollary \ref{cor2}, we have $k_+(K_{2,2n+1})>0$ and $k_-(K_{2,2n+1})>0$.
Hence the contrapositive of Proposition \ref{prop1} implies that
$K_{2,2n+1}$ cannot be deformed into a slice knot
either by only positive crossing changes or by only negative crossing changes.
\end{proof}

\section{Remark on Corollary \ref{cor1}}
In this section, we give an alternative proof of Corollary \ref{cor1} and
a proof of Theorem \ref{thm2}, by using the knot signature $\sigma$ instead of the
$d_1$-invariant.

Suppose that a knot $K_-$ is obtained from $K_+$ by a positive crossing change.
It is shown in \cite[Theorem 6.4.7]{murasugi} and \cite[Corollary 1.5]{ozsvath-szabo2}
that we have
$$
\sigma(K_-) -2 \leq \sigma(K_+) \leq \sigma(K_-)
$$
and
$$
\tau(K_+)-1 \leq \tau(K_-) \leq \tau(K_+).
$$
In particular, the following lemmas are derived from the above inequalities.

\begin{lem}
\label{lem4}
Let $K_1$ and $K_2$ be knots with $\sigma(K_1) < \sigma(K_2)$.
Then $K_1$ cannot be deformed into $K_2$ by only negative crossing changes.
\end{lem}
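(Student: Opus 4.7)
The plan is to apply the displayed signature inequality iteratively and take the contrapositive. The inequality $\sigma(K_-) - 2 \leq \sigma(K_+) \leq \sigma(K_-)$ governs a pair $(K_+, K_-)$ related by a positive crossing change $K_+ \to K_-$, equivalently a negative crossing change $K_- \to K_+$. Reading it the second way, the right-hand bound $\sigma(K_+) \leq \sigma(K_-)$ says exactly that a single negative crossing change cannot strictly increase the signature.

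First I would show the direct statement: suppose $K_1$ is deformed into $K_2$ through a finite sequence of knots $K_1 = L_0, L_1, \ldots, L_m = K_2$, where each $L_i$ is obtained from $L_{i-1}$ by a single negative crossing change. Applying the bound to each consecutive pair (taking $L_{i-1}$ as the $K_-$ and $L_i$ as the $K_+$), I get $\sigma(L_i) \leq \sigma(L_{i-1})$. Chaining these inequalities yields $\sigma(K_2) = \sigma(L_m) \leq \sigma(L_{m-1}) \leq \cdots \leq \sigma(L_0) = \sigma(K_1)$.

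Then I would conclude by contraposition: under the hypothesis $\sigma(K_1) < \sigma(K_2)$, the chain above gives a contradiction, so no such sequence of negative crossing changes exists.

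There is no real obstacle here; the lemma is an immediate packaging of the one-step signature bound cited from Murasugi and Ozsv\'ath--Szab\'o. The only thing worth being careful about is the sign convention, namely to correctly identify $K_-$ and $K_+$ in the cited inequality so that the direction of the bound matches the direction of the crossing change used in the statement.
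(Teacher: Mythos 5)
Your proof is correct and is exactly the argument the paper leaves implicit: the paper simply notes that Lemma \ref{lem4} "is derived from" the skein inequality $\sigma(K_-)-2\leq\sigma(K_+)\leq\sigma(K_-)$, and your iteration of the right-hand bound along the sequence of negative crossing changes, followed by contraposition, is the intended reasoning. You also identify the sign convention correctly ($K_+\to K_-$ is the positive crossing change, so a negative crossing change goes from $K_-$ to $K_+$ and cannot increase $\sigma$), so there is nothing to fix.
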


\begin{lem}
\label{lem5}
Let $K_1$ and $K_2$ be knots with $\tau(K_1) < \tau(K_2)$.
Then $K_1$ cannot be deformed into $K_2$ by only positive crossing changes.
\end{lem}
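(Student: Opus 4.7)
The plan is to exploit the quoted Ozsv\'ath--Szab\'o inequality $\tau(K_+)-1\leq \tau(K_-)\leq \tau(K_+)$, where $K_-$ arises from $K_+$ via a positive crossing change. The right-hand inequality says precisely that a single positive crossing change cannot increase $\tau$; hence $\tau$ is monotonically non-increasing along any sequence of positive crossing changes. Taking the contrapositive will then give the lemma directly.

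More precisely, I would argue by contradiction. Suppose $K_1$ is deformed into $K_2$ by a finite sequence of positive crossing changes, producing a chain of knots $K_1=J_0,J_1,\ldots,J_m=K_2$ in which each $J_{i+1}$ is obtained from $J_i$ by a single positive crossing change. Applying the quoted inequality to each consecutive pair yields $\tau(J_{i+1})\leq \tau(J_i)$ for every $i$, and chaining these together gives
\[
\tau(K_2)=\tau(J_m)\leq \tau(J_{m-1})\leq \cdots \leq \tau(J_0)=\tau(K_1),
\]
contradicting the hypothesis $\tau(K_1)<\tau(K_2)$.

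There is essentially no obstacle, and the argument is entirely parallel to the signature version in Lemma \ref{lem4}: once the single-crossing-change inequality is in hand, the iterated statement is immediate by induction on the length of the deformation. The only point worth double-checking is the direction of the monotonicity, namely that the convention ``$K_-$ is obtained from $K_+$ by a positive crossing change'' matches the definition of a positive crossing change given in the introduction, so that the inequality $\tau(K_-)\leq \tau(K_+)$ is correctly interpreted as saying $\tau$ does not increase when a positive crossing is switched to a negative one.
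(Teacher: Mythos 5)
Your argument is correct and is exactly what the paper intends: the paper states that Lemma \ref{lem5} is ``derived from the above inequalities,'' i.e.\ from iterating $\tau(K_-)\leq\tau(K_+)$ along the sequence of positive crossing changes, which is precisely your chain $\tau(K_2)\leq\cdots\leq\tau(K_1)$. Nothing is missing, and your attention to the direction of the convention ($K_-$ obtained from $K_+$ by a positive crossing change) is the only subtle point and you handled it correctly.
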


\def\proofname{Proof of Corollary \ref{cor1}}
\begin{proof}
By Lemma \ref{lem4}, Lemma \ref{lem5} and the fact that
$\sigma(U) = \tau(U)= 0$ for the unknot $U$,
we only need to prove that 
for any knot $K$ with $\tau (K) <0$ and any integer $n$ with
$0 < n < -2\tau(K) + (\varepsilon(K) - 1)/2$, 
the inequalities 
$\sigma(K_{2,2n+1})<0$ and $\tau(K_{2,2n+1})<0$ hold.
The inequality $\tau(K_{2,2n+1})<0$ has been already obtained in
the proof of Theorem \ref{thm1}.
Furthermore, 
the Litherland satellite formula for $\sigma$ in \cite[Theorem 2]{litherland}
implies that
$\sigma(K_{2,2n+1}) = \sigma(T_{2,2n+1}) = -2n$ for any knot $K$.
In particular, we have $\sigma(K_{2,2n+1}) <0$.
This completes the proof.
\end{proof}

\def\proofname{Proof of Theorem \ref{thm2}}
\begin{proof}
By Lemma \ref{lem4} and Lemma \ref{lem5},
we only need to prove that there exist infinitely many knots
$K_i$ $(i \in \mathbb{N})$ which satisfy $\sigma(K_i) < \sigma(K)$ 
and $\tau(K_i) < \tau(K)$.
Let $K_i := (T_{2,-2i-3})_{2,3} \# K$,
where $\#$ denotes the connected sum operation.
(Here $T_{2,-2i-3}$ is a concrete example of a knot which, with $n=1$, satisfies the conditions of Theorem \ref{thm1}.)
Since $\tau (T_{2,-2i-3}) = -i-1$ and $\varepsilon (T_{2,-2i-3}) = -1$, 
the inequality $0 < 1 < -2 \tau(T_{2,-2i-3}) + (\varepsilon(T_{2,-2i-3}) - 1)/2$
holds for any $i \in \mathbb{N}$.
Hence the argument in the above proof of Corollary \ref{cor1} implies that 
$\sigma((T_{2,-2i-3})_{2,3}) < 0$ and $\tau((T_{2,-2i-3})_{2,3}) < 0$.
Furthermore, since $\sigma$ and $\tau$ are additive,
we have
$$
\sigma(K_i)=\sigma((T_{2,-2i-3})_{2,3}) + \sigma(K) < \sigma(K)
$$
and
$$
\tau(K_i)=\tau((T_{2,-2i-3})_{2,3}) + \tau(K) < \tau(K).
$$
Note that 
$\Delta_{K_i}(t) = \Delta_{T_{2,3}}(t) \cdot \Delta_{T_{2,-2i-3}}(t^2) \cdot \Delta_{K}(t)$
and $\Delta_{T_{2,-2i-3}}(t^2) \neq \Delta_{T_{2,-2j-3}}(t^2)$ 
(for $i,j \in \mathbb{N}$, $i \neq j$),
and hence the $K_i$ are mutually distinct.
Here $\Delta$ denotes the Alexander polynomial.
\end{proof}

\begin{remark}
Note that $d_1$ also satisfies the skein inequality
$$
d_1(K_-) -2 \leq d_1(K_+) \leq d_1(K_-),
$$
which is proved by Peters \cite{peters}.
However, $d_1$ is not additive in general, 
and hence we cannot replace $\sigma$ with $d_1$ in the proof of Theorem \ref{thm2}.
\end{remark}

\end{document}